\documentclass[lettersize, 10pt, conference]{IEEEtran}    
\IEEEoverridecommandlockouts                              %

\usepackage{graphicx,float,subfigure}
\usepackage{url}
\usepackage{amssymb,amsmath,amsfonts,layout}
\usepackage{amsthm} 
\usepackage{makeidx}
\usepackage{blkarray,multirow}
\usepackage{cite}
\usepackage{enumerate}
\usepackage{algpseudocode}
\usepackage{algorithm}
\usepackage{braket}
\usepackage{tikz}
\usepackage{lipsum}

\usepackage{ifthen,calc}
\usepackage{xstring}

\usetikzlibrary {positioning}
\usepackage{empheq}
\usepackage[most]{tcolorbox}

\newtcbox{\mybox}[1][]{%
    nobeforeafter, math upper, tcbox raise base,
    enhanced, colframe=blue!30!black,
    colback=blue!30, boxrule=1pt,
    #1}

\algnewcommand{\Inputs}[1]{%
	\State \textbf{Inputs:}
	\Statex \hspace*{\algorithmicindent}\parbox[t]{.8\linewidth}{\raggedright #1}
}
\algnewcommand{\Initialize}[1]{%
	\State \textbf{Initialize:}
	\Statex \hspace*{\algorithmicindent}\parbox[t]{.8\linewidth}{\raggedright #1}
}
\algnewcommand{\Outputs}[1]{%
	\State \textbf{Outputs:}
	\Statex \hspace*{\algorithmicindent}\parbox[t]{.8\linewidth}{\raggedright #1}
}

\usepackage{tikz}
\usetikzlibrary{shapes,arrows,positioning} 
\tikzset{
	>=stealth',
	block/.style={
		rectangle,
		rounded corners,
		draw=black, very thick,
		text width=12em,
		minimum height=3em,
		text centered},
	link/.style={
		->,
		thick,
		shorten <=2pt,
		shorten >=2pt},
	decision/.style={
		diamond,
		draw, very thick,
		fill=blue!20, 
		text width=8em,
		aspect=3,
		text centered}
}

\theoremstyle{plain}

\newtheorem{assumption}{Assumption}
\newtheorem{theorem}{Theorem}[section]

\newtheorem{lemma}[theorem]{Lemma}
\newtheorem{corollary}[theorem]{Corollary}

\newcommand{\longthmtitle}[1]{\mbox{}{\bf \textit{(#1).}}}

\newcommand{\oprocendsymbol}{\hbox{$\square$}}
\newcommand{\oprocend}{\relax\ifmmode\else\unskip\hfill\fi\oprocendsymbol}

\makeatletter
\newcommand{\pushright}[1]{\ifmeasuring@#1\else\omit\hfill$\displaystyle#1$\fi\ignorespaces}
\newcommand{\pushleft}[1]{\ifmeasuring@#1\else\omit$\displaystyle#1$\hfill\fi\ignorespaces}
\renewcommand*\env@matrix[1][*\c@MaxMatrixCols c]{%
	\hskip -\arraycolsep
	\let\@ifnextchar\new@ifnextchar
	\array{#1}}
\makeatother

\newcommand{\rank}{\operatorname{rank}}

\newcommand{\real}{\ensuremath{\mathbb{R}}}

\def\@fnsymbol#1{\ensuremath{\ifcase#1\or *\or \dagger\or \ddagger\or
   \mathsection\or \mathparagraph\or \|\or **\or \dagger\dagger
   \or \ddagger\ddagger \else\@ctrerr\fi}}
\newcommand{\ssymbol}[1]{^{\@fnsymbol{#1}}}

\newcommand\norm[1]{\left\lVert#1\right\rVert}

\renewcommand{\bar}{\overline}

\allowdisplaybreaks

\graphicspath{{epsfiles/}}

\begin{document}

\title{Robust Data-Driven Control with Noisy Data}
\author{Chin-Yao Chang and Andrey Bernstein
	\thanks{C.-Y, Chang, and A. Bernstein are with the National Renewable Energy Laboratory, Golden, CO 80401, USA (Emails: \{chinyao.chang,andrey.bernstein\}@nrel.gov).}
	\thanks{This work was authored in part by NREL, operated by Alliance for Sustainable Energy, LLC, for the U.S. Department of Energy (DOE) under Contract No. DE-AC36-08GO28308. Funding provided by DOE Office of Electricity, Advanced Grid Modeling Program, through agreement NO. 33652. The views expressed in the article do not necessarily represent the views of the DOE or the U.S. Government. The U.S. Government retains and the publisher, by accepting the article for publication, acknowledges that the U.S. Government retains a nonexclusive, paid-up, irrevocable, worldwide license to publish or reproduce the published form of this work, or allow others to do so, for U.S. Government purposes.}
}

\maketitle

\begin{abstract}
	This paper presents a robust data-driven controller design based on the noisy input-output data without assumptions on the statistical properties of the noises. We start with the direct data-representation of system models that take elements from behavioral system theory, followed by analyses of the upper bound of the ``modeling'' error with the data representation with presence of noises. Some pre-conditioning methods are put into the context based on how the derived bound is structured. We lastly leverage the upper bound to develop robust controllers that ride through the data noises. 
\end{abstract}

\section{Introduction}\label{sec.introduction}
In the last decade, there has been a major surge of interest in machine learning (ML) because it can find reasonable solutions to challenging and practical optimization problems which were otherwise unsolvable with standard optimization approaches. One can view ML as black-box identification of system models or optimal policies by using massive amounts of input-output data. Due to the lack of prior knowledge of targeted systems, artificial neural network (ANN) model structure has been widely adapted and received great success. However, there remain many applications (e.g. power systems) where partial system information is known in advance and the main challenges are instead on the robustness and resilience against all sorts of disturbances such as noisy data. We therefore put efforts on adding insights on robust controls with noisy data in this work.

The measured input-output data are usually used for system identification. One branch of system identification is the direct data representation of dynamical system models which were explored in the 80's~\cite{willems1979system,willems1986time_1,willems1986time_2,willems1987time_3}. The popularity of these behavioral system approaches have been recently revived. \cite{coulson2019data} applies behavioral system theory to develop data-enabled predictive control (DeePC), which is a data-driven  alternative to model predictive control (MPC). \cite{de2019formulas} draws connections between the behavioral system model and linear matrix inequalities (LMIs) stability analysis so that some classic controller designs can be applied to behavioral system models constructed from data. Series of follow-up works further expand these ideas to robust control~\cite{berberich2020data,berberich2020robust}, switched linear systems with unknown switching~\cite{dePersis2021lqr}, time-varying linear systems~\cite{verhoek2021fundamental} and real-time applications~\cite{rotulo2021online,baros2020online}. Fundamentally, behavioral system theory does not extract more information from data compared to classical system identification approaches. However, for controller design purposes, there are some subtle advantages of behavioral system theory-based approaches, detailed in~\cite{van2020data}. 

Noisy data draws errors on the identified system and in turns the performance of the controllers built upon the model are compromised. Based on the disturbance types, various specialized methods are available, e.g., principal component analysis (PCA)~\cite{vidal2005generalized}, regularization~\cite{poggio1987computational}, or methods for noisy labels~\cite{song2022learning}. In practice, it is very challenging to have prior knowledge on the noise or make proper assumptions on it. Both~\cite{berberich2020data} and~\cite{alanwar2022robust} are about robust data-driven predictive controls assuming no prior knowledge on the noise except an upper bound on the norm. \cite{berberich2020data} showed recursive feasibility and stability of the data-driven MPC formulated with the behavioral system model. \cite{alanwar2022robust}~proposed a zonotopic data-driven predictive control scheme that guarantees robust constraint satisfaction. Our focus in this paper is not on the predictive controls, but on the characterization of the modeling errors propagated from the noises, followed by a robust closed-loop feedback gain design.

\textit{Contributions}: Our first contribution is deriving an upper bound on data model representation errors that originate from disturbances of the collected data with no assumption on statistical properties of the noises. The upper bounds are conservative while we conjecture that there exists no much tighter bound from mathematical analysis because the condition number always shows up mechanically. Therefore, we next put some pre-conditioning methods in the context aiming to reduce the condition number. Although the main purpose of reducing the condition number is tightening the analytical upper bounds, numerical examples imply that minimizing the condition number of the data matrix could reduce actual modeling errors. We finally leverage the upper bounds for robust controller design for linear systems, with some additional comments on how those results can be extended to switched linear systems. 


\textit{Notations:} We denote by $\real$ and $\real_+$ the sets of real and positive real numbers, respectively. For a matrix $A$, we write $\|A\|$ and $\|A\|_F$, respectively, as its $2$-norm and  Frobenius norm. The pseudo inverse of $A$ is written as $A^\dagger$ with the subscript of $R$ or $L$ to indicate the right or left inverse if necessary. A matrix is called standardized if every column of it has unit $2$-norm. 

\section{Data Representation of System Modeling}\label{sec:data_rep}

In this section, we review the results in~\cite{de2019formulas} that connect Willems \textit{et al.}'s fundamental lemma for behavioral system theory and simple linear algebra, and then expand the discussion to switched linear systems.

\subsection{Data representation of LTI systems} \label{sec:LTI}
This subsection reviews some results in~\cite{de2019formulas} that will be used for the remainder of the paper. Consider the following simplified LTI system:
\begin{align}\label{eq:LTI0}
	x(k+1) = Ax(k) + Bu(k), 
\end{align}
where $x\in\real^n$, $u\in\real^m$, $A$ and $B$ are in proper dimensions. Here we assume that all the elements of $x$ can be measured. Define the collection of the measured data for the time horizon $T$ in the following:
\begin{subequations}\label{eq:data_matrices}
	\begin{align}
		&U_{0} = [u_{d}(0),\cdots,u_{d}(T-1)], \\ 
		&X_{0} = [x_{d}(0),\cdots,x_{d}(T-1)], \\  
		&X_{1} = [x_{d}(1),\cdots,x_{d}(T)],
	\end{align}
\end{subequations}
where $x_d(k)$ and $u_d(k)$ are, respectively, the state and control input data points at time $k$. If all the data points are noiseless, then by~\eqref{eq:LTI0} and the definition of the data matrices in~\eqref{eq:data_matrices}, we have
\begin{align}\label{eq:LTI0_data}
	X_{1} = \begin{bmatrix}  B & A \end{bmatrix}\begin{bmatrix} U_{0} \\ X_{0}   \end{bmatrix}.
\end{align}
Equation~\eqref{eq:LTI0_data} indicates that as long as $\begin{bmatrix} U_{0} \\ X_{0}  \end{bmatrix}$ is right invertible, we can find the system matrices, $A$ and $B$, straight from the data by
\begin{align}\label{eq:LTI0_data-2}
	\begin{bmatrix}  B & A \end{bmatrix} = X_{1}\begin{bmatrix} U_{0} \\ X_{0}   \end{bmatrix}^\dagger_R.
\end{align}
The full row rank (invertibility) of $\begin{bmatrix} U_{0} \\ X_{0}  \end{bmatrix}$ is formally stated in Assumption~\ref{as:datarank}.
\begin{assumption}\longthmtitle{Full row rank of the data matrix}\label{as:datarank}
	\begin{align}\label{as:rankeq}
		\text{rank}\left(\begin{bmatrix}U_{0} \\ X_{0}\end{bmatrix}\right) = m + n
	\end{align}
	\
\end{assumption}

As a side note, for the LTI system~\eqref{eq:LTI0} expanded with $y(k) = Cx(k) + Du(k)$ with only $y$ being measured instead of $x$, a similar rank condition to~\eqref{as:rankeq} posed on a \emph{Hankel matrix} constructed from $u_d$ can lead to a similar pure data representation of the system behavior in the sense of input-output pairs (assuming the linear system is observable and controllable). The full rank condition and the data representation of the system behavior are respectively stated as persistently exciting condition and  Willems \textit{et al.}'s fundamental lemma. More details are in~\cite{coulson2019data} and~\cite{de2019formulas}. 

\subsection{Switched linear systems with known modes}
\label{sec:sw_sys}
We consider the following switched linear system:
\begin{align}\label{eq:sw_sys}
	& x(k+1) = A_{\sigma(k)}x(k)  + B_{\sigma(k)}u(k), \\ \nonumber
	& \sigma(k) = f(x(k)),
\end{align}
where $x\in\real^n$, $u\in\real^m$, and $\sigma \in \Gamma:=[1,\cdots,\gamma]$ with some finite and known $\gamma$ number of modes. If one follows standard steps in Section \ref{sec:LTI}, identifying the system matrices $A_{\sigma}$ and $B_{\sigma}$ requires the system to stay in the same mode long enough to construct data matrices as shown in~\eqref{eq:data_matrices}. This restriction hinders the practicability of the data-driven control in the sense that the switching sequence might change frequently in practice. We  show next that if the system mode at each time $k$ is given, then even with arbitrary switching sequence, similar data matrices can still capture system matrices $A_{\sigma}$ and $B_{\sigma}$ for each individual mode.

Given the switch sequence $\sigma(0), \cdots, \sigma(T)$, we enumerate the input and measured data by $(u_{d}(0),\cdots,u_{d}(T-1))$ and $(x_{d}(0),\cdots,x_{d}(T))$, and then construct the following data matrices for each mode $i$:
\begin{align*}
	&U_{i,0} = [u_{i,d}(0),\cdots,u_{i,d}(T-1)], \\ 
	&X_{i,0} = [x_{i,d}(0),\cdots,x_{i,d}(T-1)],
\end{align*}
where for every $k = 0,\cdots,T$,
\begin{align*}
	u_{i,d}(k) &= \begin{cases} u_{d}(k) & \text{ if }\sigma(k)=i \\ 0 & \text{ otherwise}
	\end{cases}, \\ 
	x_{i,d}(k) &= \begin{cases} x_{d}(k) & \text{ if }\sigma(k)=i \\ 0 & \text{ otherwise}
	\end{cases}.
\end{align*}
The data matrices $U_{i,0}$ and $X_{i,0}$ are sparse by construction because column $k$ has all zero elements if $\sigma(k)\not=i$. We next make the following mild assumption on $U_{i,0}$ and $X_{i,0}$:
\begin{assumption}\label{as:datarank_sw}
	For each $i \in \Gamma$, 
	\begin{align}\label{as:rankeq_sw}
		\text{rank}\left(\begin{bmatrix}U_{i,0} \\ X_{i,0}\end{bmatrix} \right) = n + m.
	\end{align}
\end{assumption}
Assumption~\ref{as:datarank_sw} holds as long as $T$ is long enough such that there are sufficient occurrences for every mode $i\in\Gamma$. The $X_{1}$ matrix is still defined in the same way as the LTI case: $X_{1} = [x_{d}(1),\cdots,x_{d}(T)]$. The following theorem shows how the system matrices $[B_i \; A_i]$ for all $i\in\Gamma$ are identified.
\begin{lemma}\label{thm:id_sys}
	If Assumption~\ref{as:datarank_sw} holds, then for every $i\in\Gamma$, there exists $\begin{bmatrix} U_{i,0} \\ X_{i,0}    \end{bmatrix}^\dagger$ such that
	\begin{align}\label{eq:id_sys_i}
		[B_i \; A_i] = X_{1}\begin{bmatrix}
			U_{i,0} \\ X_{i,0}
		\end{bmatrix}^\dagger, \quad \forall i\in\Gamma.
	\end{align}
\end{lemma}
\begin{proof}
	There are infinite number of selections of the pseudo inverse of $\begin{bmatrix}U_{i,0} \\ X_{i,0} \end{bmatrix}$, while we choose the one with a structure such that~\eqref{eq:id_sys_i} follows. Define $U_{i,0}^s$ and $X^s_{i,0}$ respectively as the sub-matrices of $U_{i,0}$ and $X_{i,0}$ that collect all the non-zero columns of them. Given an arbitrary pseudo inverse of $\begin{bmatrix}
		U_{i,0}^s \\ X^s_{i,0}
	\end{bmatrix}$, $\begin{bmatrix}
		U_{i,0}^s \\ X^s_{i,0}
	\end{bmatrix}^\dagger$, we define $\begin{bmatrix} U_{i,0} \\ X_{i,0} \end{bmatrix}^\dagger$ such that its rows  associated with non-zero columns of $\begin{bmatrix} U_{i,0} \\ X_{i,0} \end{bmatrix}$ are comprised of $\begin{bmatrix}
		U_{i,0}^s \\ X^s_{i,0}
	\end{bmatrix}^\dagger$, and the remaining rows are all zero. This keeps the property that $\begin{bmatrix} U_{i,0} \\ X_{i,0} \end{bmatrix}\cdot \begin{bmatrix} U_{i,0} \\ X_{i,0} \end{bmatrix}^\dagger = I$, while more importantly,
	\begin{align}\label{eq:0crossing}
		\begin{bmatrix}
			U_{j,0} \\ X_{j,0}
		\end{bmatrix}
		\begin{bmatrix}
			U_{i,0} \\ X_{i,0}
		\end{bmatrix}^\dagger = 0, \quad\quad \text{ if }i\not=j.
	\end{align}
	Equation~\eqref{eq:0crossing} will be very useful in simplifying the following equality
	\begin{align}\label{eq:eq_sw_sys}
		X_{1} = \sum_{j\in\Gamma} [B_j \; A_j]\begin{bmatrix}
			U_{j,0} \\ X_{j,0}
		\end{bmatrix},
	\end{align}
	which is straight from~\eqref{eq:sw_sys}. Equation~\eqref{eq:id_sys_i} for each $i\in\Gamma$ follows by right multiplying~\eqref{eq:eq_sw_sys} by $\begin{bmatrix} U_{i,0} \\ X_{i,0} \end{bmatrix}^\dagger$. 
\end{proof}
Applying Lemma~\ref{thm:id_sys}, one can derive the modeling for each mode from data.

\section{Controller Design with Noisy Data}
\label{sec:noisy_data}
In this section, we first derive a bound of the errors of data representation of system models. The bound is proportional to the condition number of the data matrix, so we extend some discussions on reducing the condition number. The improved condition number tightens the bound and increases the practicability of robust controller designs that leverage the bound.

\subsection{Bounds for system identification errors}
\label{sec:bounds_ID}
In this section, we consider LTI system~\eqref{eq:LTI0} for simplicity. If Assumption~\ref{as:datarank} holds and the data are noisy, the linear model identified from~\eqref{eq:LTI0_data-2} is not exactly the original $\begin{bmatrix} B & A \end{bmatrix}$. With presence of the inaccuracy of the data,~\eqref{eq:LTI0_data-2} is rewritten with slight abuse of notations 
\begin{align}\label{eq:LTI0_data-2_pert}
	[B^e \; A^e] = X_{1}\begin{bmatrix}
		U_{0} \\ X_{0}
	\end{bmatrix}^\dagger_R, \quad \begin{bmatrix}
		B & A
	\end{bmatrix} = X_{1}^\star\begin{bmatrix}
		U_{0}^\star \\ X_{0}^\star
	\end{bmatrix}^\dagger_R,
\end{align}
where $[B^e \; A^e]$ is the estimated system matrix. The actual system matrix, $\begin{bmatrix} B & A \end{bmatrix}$, can not be derived straight from the data because the accurate system values ($X_{1}^\star$ and $\begin{bmatrix}
	U_{0}^\star \\ X_{0}^\star \end{bmatrix}_R$) are unknown. If the discrepancy between the actual input-output values and measured data is too large, the collected data provide very little insight for any control. The following assumption on the boundedness of the discrepancy is therefore justified in the sense of usefulness of the data.
\begin{assumption}\longthmtitle{Bounds on the noisy data} \label{ass:Bdd_noise}
	There exist $\delta_X$ and $\delta_{UX}$  
	\begin{align}\label{eq:ass2-1}
		X^\star_{1} = X_{1} + \delta_X, \quad\quad  \begin{bmatrix}
			U^\star_{0} \\  X^\star_{0}
		\end{bmatrix}  = \begin{bmatrix}
			U_{0} \\ X_{0}
		\end{bmatrix} + \delta_{UX}
	\end{align}
	and
	\begin{align}\label{eq:ass2-2}
		\frac{\norm{\delta_X} }{\norm{X_{1}^\star}} \leq r_{X,1}, \quad \frac{\norm{\delta_{UX}} }{\norm{\begin{bmatrix}
					U_{0}^\star \\  X_{0}^\star
		\end{bmatrix}}} \leq r_{UX,0} < 1.
	\end{align}
\end{assumption}
Assumption~\ref{ass:Bdd_noise} only poses bounds on the norm of total noise of the data, so there could be a few outlier data points that can be smoothed out by the rest of accurate data points. We post additional condition of $r_{UX,0} < 1$ for the proof of Theorem~\ref{thm:1}, which provides a bound for the estimation error of the system matrix, $\delta_{BA} := \begin{bmatrix} B & A \end{bmatrix} - [B^e \; A^e]$.

\begin{theorem}\longthmtitle{Bound of the estimation error of the system model} \label{thm:1}
	If Assumptions~\ref{as:datarank} and  \ref{ass:Bdd_noise} hold and, 
	\begin{subequations}\label{eq:thm1-1}
		\begin{align}
			&\rank{(\begin{bmatrix}
					U^\star_{0} \\ X^\star_{0}
				\end{bmatrix} )} = m+n, \\ 
			&\norm{\begin{bmatrix} 
					U^\star_{0} \\ X^\star_{0}
			\end{bmatrix}} < \infty, \quad\quad \norm{\begin{bmatrix}
					U_{0} \\ X_{0}
			\end{bmatrix}} < \infty,
		\end{align}
	\end{subequations}
	then
	\begin{align}\label{eq:thm1-2}
		\frac{\norm{\delta_{BA} } }{ \norm{ \begin{bmatrix}
					B & A \end{bmatrix} } }
		\leq  c_{UX}\frac{r_{X,1} + r_{UX,0}}{1 - r_{UX,0}},
	\end{align}
	where $c_{UX}= \norm{ \begin{bmatrix} U_{0} \\ X_{0}\end{bmatrix} } \norm{\begin{bmatrix} U_{0} \\ X_{0} \end{bmatrix}^\dagger_R}$ is the condition number.
\end{theorem}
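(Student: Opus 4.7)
The plan is to reduce the problem to a single clean identity that expresses the estimation error $\delta_{BA}$ as a linear combination of the noise terms $\delta_X$ and $\delta_{UX}$, and then to take norms and apply Assumption~\ref{ass:Bdd_noise}. For brevity, let $M := \begin{bmatrix} U_{0} \\ X_{0} \end{bmatrix}$ and $M^\star := \begin{bmatrix} U_{0}^\star \\ X_{0}^\star \end{bmatrix}$, so that $M^\star = M + \delta_{UX}$ and $X_1^\star = X_1 + \delta_X$. The true dynamics~\eqref{eq:LTI0} give the exact identity $X_1^\star = [B\;A]\,M^\star$; substituting the noise decompositions yields $X_1 = [B\;A]\,M + [B\;A]\,\delta_{UX} - \delta_X$. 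Notably, no perturbation theory for the pseudo-inverse (Wedin-type bounds, for instance) is required — the fact that the true relation passes through both the noisy and noiseless data makes a direct substitution approach much cleaner than trying to analyze $\|(M^\star)^\dagger_R - M^\dagger_R\|$ head-on.

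Next, right-multiplying the identity above by $M^\dagger_R$ and using $M M^\dagger_R = I$ (which is well-defined since Assumption~\ref{as:datarank} ensures $M$ has full row rank) produces
$$[B^e\;A^e] = [B\;A] + [B\;A]\,\delta_{UX}\,M^\dagger_R - \delta_X\,M^\dagger_R,$$
so that $\delta_{BA} = -[B\;A]\,\delta_{UX}\,M^\dagger_R + \delta_X\,M^\dagger_R$. Taking norms, applying submultiplicativity, and dividing by $\|[B\;A]\|$ gives
$$\frac{\|\delta_{BA}\|}{\|[B\;A]\|} \leq \left(\frac{\|\delta_X\|}{\|[B\;A]\|} + \|\delta_{UX}\|\right)\|M^\dagger_R\|.$$
Because $\|X_1^\star\| = \|[B\;A]\,M^\star\| \leq \|[B\;A]\|\,\|M^\star\|$, the two ratios in Assumption~\ref{ass:Bdd_noise} imply $\|\delta_X\|/\|[B\;A]\| \leq r_{X,1}\|M^\star\|$ and $\|\delta_{UX}\| \leq r_{UX,0}\|M^\star\|$, so the right-hand side simplifies to $(r_{X,1} + r_{UX,0})\,\|M^\star\|\,\|M^\dagger_R\|$.

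The last step is to replace $\|M^\star\|$ by $\|M\|$, so that the condition number $c_{UX} = \|M\|\,\|M^\dagger_R\|$ appears in the final bound. This is precisely where the hypothesis $r_{UX,0} < 1$ becomes essential: the triangle inequality gives $\|M^\star\| \leq \|M\| + \|\delta_{UX}\| \leq \|M\| + r_{UX,0}\|M^\star\|$, which rearranges to $\|M^\star\| \leq \|M\|/(1 - r_{UX,0})$. Substituting this into the previous inequality produces exactly~\eqref{eq:thm1-2}.

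The main obstacle is mostly bookkeeping — tracking starred versus unstarred quantities and being careful that the right inverse $M^\dagger_R$ used is consistent with the definition of $[B^e\;A^e]$ in~\eqref{eq:LTI0_data-2_pert}. The boundedness conditions in~\eqref{eq:thm1-1} are there to guarantee that none of these norm manipulations produce indeterminate forms, and the strict inequality $r_{UX,0} < 1$ is what prevents the bound from blowing up when converting $\|M^\star\|$ to $\|M\|$, lending operational meaning to that hypothesis.
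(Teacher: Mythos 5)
Your proof is correct and reaches exactly the bound~\eqref{eq:thm1-2}, but by a somewhat different route than the paper. Writing $M=\begin{bmatrix} U_0\\ X_0\end{bmatrix}$ and $M^\star = \begin{bmatrix}U_0^\star \\ X_0^\star\end{bmatrix}$, the paper first quantifies how the noise perturbs the right inverse itself: it defines $\delta_{UX^\dagger}=(M^\star)^\dagger_R - M^\dagger_R$, derives $\delta_{UX^\dagger}=-(M^\star)^\dagger_R\,\delta_{UX}\,M^\dagger_R$ from $M^\star(M^\star)^\dagger_R=I$, and then substitutes this into $\delta_{BA}=\delta_X (M^\star)^\dagger_R + X_1\delta_{UX^\dagger}$ to arrive at the identity $\delta_{BA}=\delta_X M^\dagger_R - \begin{bmatrix} B & A\end{bmatrix}\delta_{UX}\,M^\dagger_R$. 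You bypass the pseudo-inverse perturbation entirely by inserting the exact dynamics $X_1^\star=\begin{bmatrix} B & A\end{bmatrix}M^\star$ into $[B^e \; A^e]=X_1M^\dagger_R$ and using $MM^\dagger_R=I$, which lands on the same identity in one step; from there your norm manipulations (submultiplicativity, $\|M^\star\|\ge\|X_1^\star\|/\|\begin{bmatrix} B & A\end{bmatrix}\|$, and the rearrangement $\|M^\star\|\le\|M\|/(1-r_{UX,0})$ enabled by $r_{UX,0}<1$) coincide with the paper's. What your route buys: only the single right inverse $M^\dagger_R$ fixed by the definition of $[B^e \; A^e]$ ever appears, so you avoid both the question of which right inverse of $M^\star$ is used and the paper's slightly delicate intermediate implication that invokes a left inverse $(M^\star)^\dagger_L$ of a wide matrix; the full-row-rank condition on $M^\star$ in~\eqref{eq:thm1-1} is then needed only to make the paper's data-based definition of $\begin{bmatrix} B & A\end{bmatrix}$ in~\eqref{eq:LTI0_data-2_pert} consistent with the true system matrices that your argument starts from. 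What the paper's route buys is an explicit expression for the perturbation of the pseudo-inverse, which is reusable machinery in its own right but is not needed to obtain this bound.
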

\begin{proof}
	The key element of the proof is estimating how the errors of the data propagate to the pseudo inverse of the data matrix $\begin{bmatrix} U_{0} \\ X_{0}\end{bmatrix}$ when computing $\begin{bmatrix} B & A \end{bmatrix}$. Defining a matrix $\delta_{UX^\dagger}$ such that$\begin{bmatrix} U^\star_{0} \\ X^\star_{0} \end{bmatrix}^\dagger_R = 
	\begin{bmatrix} U_{0} \\ X_{0} \end{bmatrix}^\dagger_R + \delta_{UX^\dagger},$ we have
	\begin{align}\label{eq:diff_UX}
		&\begin{bmatrix} U_{0}^\star \\ X_{0}^\star\end{bmatrix}\begin{bmatrix} U_{0}^\star \\ X_{0}^\star\end{bmatrix}^\dagger_R = I \nonumber \\  \Rightarrow &\Big( \begin{bmatrix} U_{0} \\ X_{0} \end{bmatrix} + \delta_{UX} \Big) \Big( \begin{bmatrix} U_{0} \\ X_{0} \end{bmatrix}^\dagger_R + \delta_{UX^\dagger} \Big)  = I \nonumber \\ \nonumber
		\Rightarrow & \begin{bmatrix} U_{0} \\ X_{0}\end{bmatrix}\begin{bmatrix} U_{0} \\ X_{0}\end{bmatrix}^\dagger_R \hspace{-2mm} + \delta_{UX} \begin{bmatrix} U_{0} \\ X_{0}\end{bmatrix}^\dagger_R \hspace{-2mm} + \begin{bmatrix} U_{0} \\ X_{0}\end{bmatrix} \delta_{UX^\dagger} + \delta_{UX}\delta_{UX^\dagger} = I \nonumber \\ 
		\Rightarrow & \delta_{UX^\dagger} = -\begin{bmatrix} U_{0}^\star \\ X_{0}^\star\end{bmatrix}^\dagger_L\delta_{UX} \begin{bmatrix} U_{0} \\ X_{0}\end{bmatrix}^\dagger_R \nonumber \\
		\Rightarrow & \delta_{UX^\dagger} = -\begin{bmatrix} U_{0}^\star \\ X_{0}^\star\end{bmatrix}^\dagger_R\delta_{UX} \begin{bmatrix} U_{0} \\ X_{0}\end{bmatrix}^\dagger_R,
	\end{align}
	where from the third to the fourth line we use the property that $\begin{bmatrix} U_{0} \\ X_{0}\end{bmatrix}\begin{bmatrix} U_{0} \\ X_{0}\end{bmatrix}^\dagger_R = I$. By definitions of $\delta_{BA}$, $\begin{bmatrix}B & A
	\end{bmatrix}$, and $[B^e \; A^e]$, we also have
	\begin{align}\label{eq:diff_data_model}
		\delta_{BA} &= X^\star_{1}\begin{bmatrix}
			U_{0}^\star \\ X_{0}^\star
		\end{bmatrix}^\dagger_R - X_{1}  \begin{bmatrix}
			U_{0} \\ X_{0}
		\end{bmatrix}^\dagger_R  \nonumber \\ \nonumber
		&\hspace{-7mm}= (X_{1} + \delta_X) \Big( \begin{bmatrix}
			U_{0} \\ X_{0}
		\end{bmatrix}^\dagger_R + \delta_{UX^\dagger} \Big) - X_{1}  \begin{bmatrix}
			U_{0} \\ X_{0}
		\end{bmatrix}^\dagger_R  \\
		&\hspace{-7mm}=  \delta_X \begin{bmatrix}
			U_{0}^\star \\ X_{0}^\star
		\end{bmatrix}^\dagger_R + X_{1} \delta_{UX^\dagger}.
	\end{align}
	Substituting~\eqref{eq:diff_UX} to~\eqref{eq:diff_data_model} gives
	\begin{align}\label{eq:diff_data_model-2}
		\delta_{BA} &= \delta_X\begin{bmatrix}
			U_{0}^\star \\ X_{0}^\star
		\end{bmatrix}^\dagger_R \hspace{-3mm} - (X^\star_{1} -\delta_X)  \begin{bmatrix} U_{0}^\star \\ X_{0}^\star \end{bmatrix}^\dagger_R\delta_{UX} \begin{bmatrix} U_{0} \\ X_{0}\end{bmatrix}^\dagger_R  \nonumber \\ 
		&\hspace{-9mm}= \delta_X\begin{bmatrix}
			U_{0}^\star \\ X_{0}^\star
		\end{bmatrix}^\dagger_R \hspace{-3mm} - \begin{bmatrix}
			B & A \end{bmatrix}\delta_{UX} \begin{bmatrix} U_{0} \\ X_{0}\end{bmatrix}^\dagger_R + \delta_X \delta_{UX^\dagger} \nonumber \\
		&\hspace{-9mm}= \delta_X\begin{bmatrix}
			U_{0} \\ X_{0}
		\end{bmatrix}^\dagger_R \hspace{-3mm} - \begin{bmatrix}
			B & A \end{bmatrix}\delta_{UX} \begin{bmatrix} U_{0} \\ X_{0}\end{bmatrix}^\dagger_R.
	\end{align}
	Taking the norm on both sides of~\eqref{eq:diff_data_model-2} gives
	\begin{align}\label{eq:diff_data_model-3}
		& \norm{\delta_{BA}  } 
		\leq \norm{\delta_X} \norm{\begin{bmatrix}
				U_{0} \\ X_{0}
			\end{bmatrix}^\dagger_R} + \norm{ \begin{bmatrix}
				B & A \end{bmatrix} } \norm{ \delta_{UX} } \norm{\begin{bmatrix} U_{0} \\ X_{0}\end{bmatrix}^\dagger_R}  \\ \nonumber
		\Rightarrow & \frac{\norm{\delta_{BA} } }{ \norm{ \begin{bmatrix}
					B & A \end{bmatrix} } }
		\leq \frac{\norm{\delta_X} }{\norm{ \begin{bmatrix}
					B & A \end{bmatrix} }} \norm{\begin{bmatrix}
				U_{0} \\ X_{0}
			\end{bmatrix}^\dagger_R} + \norm{ \delta_{UX}  }  \norm{ \begin{bmatrix} U_{0} \\ X_{0}\end{bmatrix}^\dagger_R}   \nonumber 
	\end{align}
	Substituting $\norm{ \begin{bmatrix} U_{0}^\star \\ X_{0}^\star\end{bmatrix} }  \geq \frac{\|X^\star_{1}\|}{ \|\begin{bmatrix} B & A \end{bmatrix}\| }$ to~\eqref{eq:diff_data_model-3}, we get
	\begin{align}\label{eq:diff_data_model-4}
		\frac{\norm{\delta_{BA} } }{ \norm{ \begin{bmatrix}
					B & A \end{bmatrix} } }
		&\leq \norm{ \begin{bmatrix} U_{0}^\star \\ X_{0}^\star \end{bmatrix} } \norm{\begin{bmatrix}
				U_{0} \\ X_{0}
			\end{bmatrix}^\dagger_R} \Big( \frac{\norm{\delta_X} }{ \norm{X^\star_{1}} }   +  \frac{ \norm{ \delta_{UX}  } }{ \norm{\begin{bmatrix} U_{0}^\star \\ X_{0}^\star \end{bmatrix}} }  \Big)  \nonumber \\
		& = (r_{X,1} + r_{UX,0} ) \norm{\begin{bmatrix}
				U_{0} \\ X_{0}
			\end{bmatrix}^\dagger_R} \norm{ \begin{bmatrix} U_{0}^\star \\ X_{0}^\star \end{bmatrix} }.
	\end{align}
	It is more preferable to have the bound in $\norm{ \begin{bmatrix} U_{0} \\ X_{0} \end{bmatrix} }$ instead of the unknown $\norm{ \begin{bmatrix} U_{0}^\star \\ X_{0}^\star \end{bmatrix} }$. Therefore, we work around to bound $\norm{ \begin{bmatrix} U_{0}^\star \\ X_{0}^\star \end{bmatrix} }$ by $\norm{ \begin{bmatrix} U_{0} \\ X_{0} \end{bmatrix} }$,
	\begin{align}\label{eq:diff_data_model-5}
		&\norm{ \begin{bmatrix} U_{0}^\star \\ X_{0}^\star \end{bmatrix} } \leq \norm{ \begin{bmatrix} U_{0} \\ X_{0} \end{bmatrix} } + \norm{ \delta_{UX} } = \norm{ \begin{bmatrix} U_{0} \\ X_{0} \end{bmatrix} } + r_{UX,0} \norm{ \begin{bmatrix} U_{0}^\star \\ X_{0}^\star \end{bmatrix} }  \nonumber   \\
		&\Longrightarrow \norm{ \begin{bmatrix} U_{0}^\star \\ X_{0}^\star \end{bmatrix} } \leq \frac{1}{1 - r_{UX,0}} \norm{\begin{bmatrix} U_{0} \\ X_{0} \end{bmatrix} }.
	\end{align}
	Substituting~\eqref{eq:diff_data_model-5} to~\eqref{eq:diff_data_model-4} completes the proof.
\end{proof}

The caveat of the bound  Theorem~\ref{thm:1} provides,~\eqref{eq:thm1-2}, is that the denominator, $\begin{bmatrix}B & A \end{bmatrix}$, is unknown. By imposing a stronger assumption, Corollary~\ref{cor:1} gives a bound on $\delta_{BA}$ that is more practically implementable.
\begin{corollary}
	\longthmtitle{Bound of the estimation error of the system model} \label{cor:1}
	If Assumptions~\ref{as:datarank}, \ref{ass:Bdd_noise}, Eq.~\eqref{eq:thm1-1} hold, and 
	\begin{align}\label{eq:cor1-1}
		c_{UX}\frac{r_{X,1} + r_{UX,0}}{1 - r_{UX,0}} := \bar{c}< 1,
	\end{align}
	then
	\begin{align}\label{eq:cor1-2}
		\norm{\delta_{BA}
		} \leq \frac{\bar{c}}{1 - \bar{c}}\norm{\begin{bmatrix} B^e & A^e \end{bmatrix}
		}
	\end{align}
\end{corollary}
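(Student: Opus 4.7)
The plan is to bootstrap the bound already established in Theorem~\ref{thm:1} so that the unknown quantity $\norm{[B\ A]}$ on its right-hand side is replaced by the computable $\norm{[B^e\ A^e]}$. Since $[B\ A] = [B^e\ A^e] + \delta_{BA}$, the triangle inequality already provides the natural link between the two norms, and the assumption $\bar c < 1$ will allow me to absorb the resulting $\norm{\delta_{BA}}$ term back onto the left-hand side.

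Concretely, I would first invoke Theorem~\ref{thm:1}, whose hypotheses coincide with those of the corollary, to write
\begin{align*}
\norm{\delta_{BA}} \;\leq\; \bar c \,\norm{[B\ A]}.
\end{align*}
Next, using $[B\ A] = [B^e\ A^e] + \delta_{BA}$ and the triangle inequality, I would estimate $\norm{[B\ A]} \leq \norm{[B^e\ A^e]} + \norm{\delta_{BA}}$ and substitute to obtain
\begin{align*}
\norm{\delta_{BA}} \;\leq\; \bar c \,\norm{[B^e\ A^e]} + \bar c\,\norm{\delta_{BA}}.
\end{align*}

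Finally, because the assumption~\eqref{eq:cor1-1} guarantees $1 - \bar c > 0$, I can move the $\bar c \norm{\delta_{BA}}$ term to the left-hand side and divide by $1 - \bar c$ to reach
\begin{align*}
\norm{\delta_{BA}} \;\leq\; \frac{\bar c}{1 - \bar c}\,\norm{[B^e\ A^e]},
\end{align*}
which is exactly~\eqref{eq:cor1-2}. There is really no main obstacle here: the proof is a one-line rearrangement of Theorem~\ref{thm:1} combined with the triangle inequality, and the only substantive use of the new hypothesis $\bar c < 1$ is to ensure that the division in the last step is well-defined and preserves the direction of the inequality. The corollary's value lies not in the difficulty of its proof but in converting an a priori bound (depending on the unknown true system) into an a posteriori bound (depending only on the estimated system matrix and the data), which is what makes it usable in subsequent robust controller design.
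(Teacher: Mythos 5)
Your proposal is correct and follows essentially the same route as the paper's own proof: invoke the bound $\norm{\delta_{BA}} \leq \bar{c}\norm{\begin{bmatrix} B & A \end{bmatrix}}$ from Theorem~\ref{thm:1}, apply the triangle inequality to replace $\norm{\begin{bmatrix} B & A \end{bmatrix}}$ by $\norm{\begin{bmatrix} B^e & A^e \end{bmatrix}} + \norm{\delta_{BA}}$, and rearrange using $\bar{c} < 1$. Nothing is missing, and your closing remark about converting an a priori bound into an a posteriori one accurately captures the corollary's purpose.
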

\begin{proof}
	The proof is straightforward.  By~\eqref{eq:thm1-2}, we have
	\begin{align*}
		&\norm{\delta_{BA}
		} \leq \bar{c}\norm{\begin{bmatrix} B & A \end{bmatrix} } \\ 
		\Longrightarrow &\norm{\delta_{BA}
		} \leq \bar{c}\Big(\norm{\begin{bmatrix} B^e & A^e \end{bmatrix} } + \norm{\delta_{BA} } \Big).
	\end{align*}
	Rearranging the terms in the equation above completes the proof.
\end{proof}
Note that for $\bar{c} < 1$, $r_{X,1}$ should be strictly less than one because the condition number $c_{UX} \geq 1$. Therefore, although we do not specifically impose an upper bound on $r_{X,1}$, it practically needs to be small enough so that the bound~\eqref{eq:cor1-2} can be derived. In general, there could be some other ways to bound the error. However, to the best of our knowledge, the tightness of the bounds is predominated by the condition number $c_{UX}$ and there is no way to remove the condition number from the bounds. Furthermore, the condition number $c_{UX}$ is determined by the raw data and there is not much control over it. Though there is some research on  \emph{effective} condition number for positive definite matrices that can potentially tighten the bounds~\cite{chan1988effectively,li2007effective}, how the concept extends to common rectangular matrices is unclear. We next discuss two routes to pre-conditioning the data matrix. 

\subsection{Pre-conditioning the data matrix}
There are two benefits of pre-conditioning the data matrix by reducing its condition number. One is a (numerically) tighter bound for robust controller design. The other is that the actual modeling error propagated from the noisy data can be reduced. The pre-conditioning is non-trivial, so we only go through some potentially useful methods that reduce the condition number. 

The first way to pre-condition the data matrix is by appropriate  selection of data points. Recall that the only requirement for the data representation of the original system model is the full row rank of $\begin{bmatrix} U_{0} \\ X_{0} \end{bmatrix}$ and we construct $U_0$ (or $X_0$) by using all the data from $k=0,\cdots,T$. The idea is that by selecting a subset of the $T+1$ data points to construct $U_0$ (or $X_0$), the condition number can become smaller, while the full row rank condition still holds. \cite{tropp2009column} provides some useful insights in this route, particularly the following theorem:
\begin{theorem}\longthmtitle{Bougain-Tzafriri \cite{tropp2009column}} 
	\label{thm:BT}
	Suppose matrix $A$ is standardized. Then there is a set $\tau$ of column indices
	for which
	\begin{align*}
		|\tau| \geq c\cdot  \frac{\|A\|_F}{\|A\|}
	\end{align*}
	such that the sub-matrix of $A$ indexed by $\tau$ has the condition number less than or equal to $\sqrt{3}$.
\end{theorem}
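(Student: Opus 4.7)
The plan is to follow the probabilistic random-selection paradigm of~\cite{tropp2009column}, which refines the classical argument of Bourgain and Tzafriri. The pivotal observation is that, because $A$ is standardized, the Gram matrix decomposes as $A^\top A = I + M$, where $M$ is the hollow matrix of off-diagonal inner products. For any column subset $\tau$, the principal submatrix satisfies $A_\tau^\top A_\tau = I_{|\tau|} + M_\tau$; hence if one can force $\|M_\tau\| \leq 1/2$, every eigenvalue of $A_\tau^\top A_\tau$ lies in $[1/2, 3/2]$, which bounds the condition number of $A_\tau$ by $\sqrt{3}$. The problem therefore reduces to finding a large $\tau$ for which the off-diagonal block of the Gram matrix has small operator norm.

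First, model the selection by assigning each column index an independent Bernoulli with parameter $\delta \in (0,1)$, encoded as a $\{0,1\}$-diagonal selector matrix $R$; the submatrix of interest is then $RMR$. Second, and this is the technical core, estimate $\mathbb{E}\|RMR\|$ via a non-commutative moment inequality. The target is a bound of the shape $\mathbb{E}\|RMR\| \lesssim \delta \|M\| + \sqrt{\delta}\,\|A\|$, so that choosing $\delta$ proportional to $1/\|A\|^2$ yields $\mathbb{E}\|RMR\| \leq 1/4$, after which Markov's inequality guarantees $\|M_\tau\| \leq 1/2$ with probability bounded away from zero. Third, a standard Chernoff bound on $|\tau| = \sum_i R_{ii}$ shows that $|\tau| \geq \tfrac{1}{2}\delta n$ with high probability, where $n$ is the number of columns; since standardization forces $\|A\|_F^2 = n$, this translates into $|\tau| \gtrsim \|A\|_F^2/\|A\|^2$, which (using $\|A\|_F \geq \|A\|$) is at least as strong as the stated bound $|\tau| \geq c\,\|A\|_F/\|A\|$ after adjusting the constant. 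A union-bound / pigeonhole argument then produces a deterministic realization $\tau$ meeting both conditions simultaneously.

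The hard part will be step two: obtaining a dimension-free bound on $\mathbb{E}\|RMR\|$ with sharp $\sqrt{\delta}$ scaling. A naive invocation of non-commutative Khintchine or matrix Chernoff introduces a spurious $\sqrt{\log n}$ factor, which would weaken the stable-rank scaling to a sub-optimal quantity; removing it requires either the original decoupling and chaining machinery of Bourgain and Tzafriri or the refined matrix-concentration framework developed in~\cite{tropp2009column}. A secondary but non-trivial bookkeeping challenge is calibrating constants so that the final condition-number threshold is exactly $\sqrt{3}$ rather than something larger, which in turn constrains the admissible $\delta$ and hence the universal constant $c$ in the conclusion.
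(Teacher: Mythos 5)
First, note that the paper does not prove this statement at all: Theorem~\ref{thm:BT} is imported verbatim from \cite{tropp2009column} as background for the pre-conditioning discussion, so there is no internal proof to compare yours against; your proposal can only be judged as a reconstruction of the cited argument. At the architectural level your reconstruction is faithful and partly correct: the reduction is right (for standardized $A$, $A_\tau^\top A_\tau = I_{|\tau|} + M_\tau$ with $M$ the hollow part of the Gram matrix, and $\|M_\tau\|\le 1/2$ forces the spectrum into $[1/2,3/2]$ and hence $\kappa(A_\tau)\le\sqrt{3}$), the stable-rank accounting is right ($\|A\|_F^2=n$, and $\|A\|_F^2/\|A\|^2 \ge \|A\|_F/\|A\|$ so the stated, weaker bound follows), and random Bernoulli selection plus Markov and Chernoff is indeed how the probabilistic proofs of Bourgain--Tzafriri-type results are organized.

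The genuine gap is exactly where you flag it, and flagging it does not fill it: the entire content of the theorem lives in the dimension-free estimate $\mathbb{E}\|RMR\| \lesssim \delta\|M\| + \sqrt{\delta}\,\|A\|$, which you state as a ``target'' and then defer to ``the original decoupling and chaining machinery of Bourgain and Tzafriri or the refined matrix-concentration framework developed in~\cite{tropp2009column}.'' That is circular as a proof of a theorem attributed to those very sources; everything else in your outline (Markov, Chernoff, choosing $\delta \propto 1/\|A\|^2$, intersecting the two events) is routine once that lemma is granted. Moreover, the inequality is not available off the shelf in the clean form you wrote: standard matrix Chernoff/noncommutative Khintchine applied naively gives a $\sqrt{\log n}$ (or additive $\log n \cdot \max_{jk}|m_{jk}|$) degradation, and removing it requires the decoupling step $\mathbb{E}\|RMR\| \lesssim \mathbb{E}\|RM\hat R\|$ with an independent copy $\hat R$, followed by moment estimates at order $p\sim\log n$ together with the specific structure $M=A^\top A - I$ (so that $\max_j\|Me_j\|\le\|A\|$), which is precisely the nontrivial part of the Bourgain--Tzafriri/Tropp analysis. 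Until that estimate is actually derived (or an alternative route is given, e.g.\ the deterministic barrier-function argument of Spielman--Srivastava for restricted invertibility, suitably upgraded to control $\sigma_{\max}$ as well so as to reach a $\sqrt{3}$ condition-number bound), the proposal is an accurate map of the proof rather than a proof.
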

The constant $c$ in Theorem~\ref{thm:BT} refers to a positive, universal constant. The upper bound of $\sqrt{3}$ is very decent such that the bound in Theorem~\ref{thm:1} is  tight in the sense that there is limited amplification of data errors toward the modeling error. What makes Theorem~\ref{thm:1} more compelling is that there is an algorithmic version available~\cite[Algorithm~2]{tropp2009column}. However, practically, we may not find a feasible selection of the columns that gives the bound close to $\sqrt{3}$. One of the main reasons is that Theorem~\ref{thm:BT} (or~\cite[Algorithm~2]{tropp2009column}) counts the option of non-full row rank selection of columns, or vertical matrices such that Assumption~\ref{as:datarank} does not hold.  Nevertheless, one can modify~\cite[Algorithm~2]{tropp2009column} with an additional constraint on the number of columns in an attempt to improve the condition number by not using the full data set. 

Another way to improve the condition number is diagonal scaling. The goal is finding diagonal matrices, $D_L$ and $D_R$, such that the condition number of $\widehat{\begin{bmatrix} U_0 \\ X_0 \end{bmatrix}}:= D_L \begin{bmatrix} U_0 \\ X_0 \end{bmatrix} D_R$ is smaller than $\begin{bmatrix} U_0 \\ X_0 \end{bmatrix}$. This diagonal rescaling does not change the structure of the linear equality that we aim to solve:
\begin{align}
	&X_1 = \begin{bmatrix} B & A \end{bmatrix} \begin{bmatrix}
		U_0 \\ X_0
	\end{bmatrix} \nonumber \\
	\Longleftrightarrow& X_1 D_R = \begin{bmatrix} B & A \end{bmatrix} D_L^{-1} \Big(D_L\begin{bmatrix}
		U_0 \\ X_0 \end{bmatrix} D_R\Big) \nonumber \\
	\Longleftrightarrow& \widehat{X}_1 = \widehat{\begin{bmatrix} B & A \end{bmatrix} } \widehat{\begin{bmatrix}
			U_0 \\ X_0 \end{bmatrix} }, \label{eq:rescale_LTI}
\end{align}
where we define $\widehat{X}_1 = X_1 D_R$ and $\widehat{\begin{bmatrix} B & A \end{bmatrix}} = \begin{bmatrix} B & A \end{bmatrix} D_L^{-1}$. By repeating the steps in section~\ref{sec:bounds_ID} for~\eqref{eq:rescale_LTI} instead of~\eqref{eq:LTI0_data}, we can get the bound of the error term $\delta_{\widehat{BA}}$ relative to $\widehat{\begin{bmatrix} B & A \end{bmatrix}}$ as in~\eqref{eq:thm1-2} in Theorem~\ref{thm:1}, which is tighter than the original one in the sense that the condition number for $\widehat{\begin{bmatrix} U_0 \\ X_0 \end{bmatrix}}$ is smaller than $\begin{bmatrix} U_0 \\ X_0 \end{bmatrix}$. Note that we can not conclude analytically that the resulting $\begin{bmatrix} B & A \end{bmatrix}$ deduced from $\widehat{\begin{bmatrix} B & A \end{bmatrix}}$ has a smaller modeling error (originated from noisy data) compared to the case without the diagonal scaling. However, numerical studies imply that diagonal scaling could reduce the errors. More details about heuristic algorithms for the diagonal scaling can be found in~\cite{takapoui2016preconditioning,bradley2010algorithms}. 


\subsection{Robust controller design for LTI systems}
In this subsection, we leverage the modeling bounds in section~\ref{sec:bounds_ID} for a robust controller design for LTI systems. Our goal here is designing a feedback gain $K\in \real^{m\times n}$ such that~\eqref{eq:LTI0} is stable without knowing $A$ and $B$ matrices but some noisy input-output data that construct $U_0$, $X_0$, and $X_1$. We assume that there is an upper bound on the feedback gain, denoted as $\|K\| \leq \bar{K}$. Theorem~\ref{thm:robust_LTI} shows a formulation that finds the robust feedback gain, $K$.
\begin{theorem}\longthmtitle{Data-driven robust controller for LTIs}\label{thm:robust_LTI}
	If Assumption~\ref{as:datarank}, \ref{ass:Bdd_noise} holds and $\exists\lambda\in\real_+$, $Q\in \real^{T \times n}$, $\Delta X_1\in \real^{n \times T }$ such that 
	\begin{subequations}
		\label{eq:Lyap_stab-3}
		\begin{align}
			&X_0 Q = I, \label{eq:Lyap_stab-3_1} \\ 
			&\begin{bmatrix}
				\bar{K}I & U_0 Q \\ (U_0 Q)^\top & \bar{K}I
			\end{bmatrix}  \succeq 0, \label{eq:Lyap_stab-3_2} \\ 
			&\begin{bmatrix}
				(1 -\lambda \bar{\Delta X_1})I & X_{1}Q + (1 -\lambda )\Delta X_1 Q \\ 
				{Q}^\top { X_{1}  }^\top \hspace{-2mm}+ (1 \hspace{-1mm}-\lambda) {Q}^\top {\Delta X_1}^\top \hspace{-1mm}  & (1 -\lambda \bar{\Delta X_1})I
			\end{bmatrix}\succeq 0, \label{eq:Lyap_stab-3_3}
		\end{align}
	\end{subequations}
	where  $\bar{\Delta X_1} = \frac{\bar{c}}{1 - \bar{c}} \norm{\begin{bmatrix} B^e & A^e \end{bmatrix}}\norm{\begin{bmatrix} \bar{K} \\ I \end{bmatrix}}$. Then $u = Kx$, $K:= U_0Q(X_0Q)^{-1}$, stabilizes system~\eqref{eq:LTI0}.
\end{theorem}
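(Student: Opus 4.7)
The approach is a standard discrete-time Lyapunov argument with Lyapunov matrix $P=I$, forced by the constraint $X_0Q=I$. Under this constraint the feedback reduces to $K = U_0Q(X_0Q)^{-1}= U_0Q$, and a Schur complement applied to~\eqref{eq:Lyap_stab-3_2} yields $(U_0Q)(U_0Q)^\top\preceq\bar K^2 I$, i.e.\ $\norm{K}\le\bar K$. This a priori gain bound is exactly what is needed to invoke the noise-to-model bound of Corollary~\ref{cor:1} with the constant $\bar{\Delta X_1}$ once the controller is in the loop.

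Next I would rewrite the true closed-loop matrix as a data expression plus a norm-bounded error. Using $[B\ A]=[B^e\ A^e]+\delta_{BA}$ together with the data identity $[B^e\ A^e]\begin{bmatrix}U_0\\X_0\end{bmatrix}=X_1$ and $X_0Q=I$,
\[
A+BK \;=\; [B\ A]\begin{bmatrix}K\\I\end{bmatrix} \;=\; [B^e\ A^e]\begin{bmatrix}U_0Q\\X_0Q\end{bmatrix}+\delta_{BA}\begin{bmatrix}K\\I\end{bmatrix} \;=\; X_1Q+\tilde E,
\]
with $\tilde E:=\delta_{BA}\begin{bmatrix}K\\I\end{bmatrix}$. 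Applying Corollary~\ref{cor:1} and $\norm{K}\le\bar K$ yields $\norm{\tilde E}\le\frac{\bar c}{1-\bar c}\norm{\begin{bmatrix}B^e & A^e\end{bmatrix}}\norm{\begin{bmatrix}\bar K\\I\end{bmatrix}}=\bar{\Delta X_1}$, so the true closed-loop matrix is a spectral-norm perturbation of $X_1Q$ of size at most $\bar{\Delta X_1}$.

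For stability with $V(x)=x^\top x$ it suffices to establish $\norm{A+BK}\le 1$. A Schur complement on~\eqref{eq:Lyap_stab-3_3} gives $\norm{X_1Q+(1-\lambda)\Delta X_1\,Q}\le 1-\lambda\bar{\Delta X_1}$. Identifying the slack term $\Delta X_1 Q$ with the actual perturbation $\tilde E$ (whose norm is already bounded by $\bar{\Delta X_1}$) and writing $X_1Q+\tilde E=\bigl(X_1Q+(1-\lambda)\tilde E\bigr)+\lambda\tilde E$, the triangle inequality gives
\[
\norm{A+BK} \;\le\; (1-\lambda\bar{\Delta X_1})+\lambda\norm{\tilde E} \;\le\; (1-\lambda\bar{\Delta X_1})+\lambda\bar{\Delta X_1} \;=\; 1,
\]
so that $V(x(k+1))\le V(x(k))$ and hence $u=Kx$ stabilizes~\eqref{eq:LTI0} (with strict decrease whenever the LMI is strict).

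The hard part will be making rigorous the identification of the slack variable $\Delta X_1$ with the unknown true perturbation. The cleanest route is the specialization $\lambda=1$, which annihilates the $(1-\lambda)\Delta X_1 Q$ term in~\eqref{eq:Lyap_stab-3_3} and reduces the LMI to the robust certificate $\norm{X_1Q}\le 1-\bar{\Delta X_1}$; the triangle inequality then closes the proof independent of $\tilde E$. For general $\lambda\in\real_+$, the LMI should be interpreted as an S-procedure-style certificate in which $\lambda$ is the nonnegative multiplier associated with the norm-ball constraint $\norm{\tilde E}\le\bar{\Delta X_1}$, so feasibility of~\eqref{eq:Lyap_stab-3} for some triple $(\lambda,Q,\Delta X_1)$ implies $\norm{A+BK}\le 1$ uniformly over the admissible perturbations allowed by Corollary~\ref{cor:1}.
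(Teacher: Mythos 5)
Your route coincides with the paper's own proof in all but the last step: fix $P=I$ through~\eqref{eq:Lyap_stab-3_1}, extract $\norm{K}=\norm{U_0Q}\le \bar K$ from~\eqref{eq:Lyap_stab-3_2} by Schur complement, decompose the true closed loop as $A+BK = X_1Q + \delta_{BA}\begin{bmatrix} K \\ I \end{bmatrix}$ (the paper does exactly this via $\begin{bmatrix} K \\ I\end{bmatrix}=\begin{bmatrix} U_0 \\ X_0\end{bmatrix}G$, $Q=GP$, $\Delta X_1=\delta_{BA}\begin{bmatrix} U_0 \\ X_0\end{bmatrix}$), and bound the perturbation by $\bar{\Delta X_1}$ using Corollary~\ref{cor:1} together with the gain bound. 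Where you differ is the treatment of~\eqref{eq:Lyap_stab-3_3}: the paper requires the Lyapunov LMI to hold for \emph{all} $\Delta X_1Q$ in the norm ball~\eqref{eq:bdd_DX1-2-2} and passes to~\eqref{eq:Lyap_stab-3_3} by an S-procedure, whereas you read~\eqref{eq:Lyap_stab-3_3} directly as $\norm{X_1Q+(1-\lambda)\Delta X_1Q}\le 1-\lambda\bar{\Delta X_1}$ and close with a triangle inequality.

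The gap you flag yourself is genuine and your argument does not close it for general $\lambda$. In the theorem $\Delta X_1$ is an existentially quantified decision variable, and nothing forces a feasible $\Delta X_1$ to coincide with the true perturbation $\tilde E=\delta_{BA}\begin{bmatrix} K \\ I\end{bmatrix}$; your splitting $X_1Q+\tilde E=(X_1Q+(1-\lambda)\tilde E)+\lambda\tilde E$ invokes the LMI bound with $\Delta X_1Q$ replaced by $\tilde E$, which is legitimate only if the two are equal. The $\lambda=1$ specialization is rigorous (it reduces~\eqref{eq:Lyap_stab-3_3} to $\norm{X_1Q}\le 1-\bar{\Delta X_1}$ and hence $\norm{A+BK}\le 1$), but it only proves the implication for feasible triples with $\lambda=1$, not for every $\lambda\in\real_+$ as the statement is quantified. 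Indeed, for $\lambda<1$ one may take $\Delta X_1=-\tfrac{1}{1-\lambda}X_1QQ^\dagger_L$ (so that $\Delta X_1Q=-\tfrac{1}{1-\lambda}X_1Q$), which makes~\eqref{eq:Lyap_stab-3_3} hold whenever $\lambda\bar{\Delta X_1}\le 1$ with no constraint on $X_1Q$ at all; so the certificate is only meaningful under the paper's robust/S-procedure reading, in which $\lambda$ is the multiplier and the LMI is meant to certify the Lyapunov inequality uniformly over the admissible ball. To be fair, the paper's own proof is terse at precisely this point ("by applying S-procedure"), and your derivation matches its substance everywhere else; note also that both you and the paper only obtain the non-strict bound $\norm{A+BK}\le 1$, so asymptotic (rather than marginal) stability requires the inequalities to be taken strictly.
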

\begin{proof}
	A stability condition for a feedback control $u = Kx$ is given by
	\begin{align}\label{eq:Lyap_stab}
		\exists P \succeq 0 \text{ s.t. } (A +BK)P(A +BK)^\top \preceq P.
	\end{align}
	Finding a $K$ that stabilizes the linear system is all about solving linear matrix inequalities shown in~\eqref{eq:Lyap_stab} when $A$ and $B$ are known. Similar to the method in~\cite{de2019formulas}, we define
	\begin{align}\label{eq:GainK}
		\begin{bmatrix}
			K \\ I 
		\end{bmatrix} = \begin{bmatrix}
			U_{0} \\ X_{0}
		\end{bmatrix} G,
	\end{align}
	which leads to
	\begin{align}\label{eq:data_rep}
		&A + B K = \begin{bmatrix} B & A \end{bmatrix} \begin{bmatrix} K \\ I \end{bmatrix} =\begin{bmatrix} B & A \end{bmatrix} \begin{bmatrix}
			U_{0} \\ X_{0} 
		\end{bmatrix} G   \\  \nonumber
		&=\Big( \begin{bmatrix} B^e & \hspace{-3mm} A^e \end{bmatrix} \begin{bmatrix}
			U_{0} \\ X_{0} 
		\end{bmatrix} + \delta_{BA} \begin{bmatrix}
			U_{0} \\ X_{0} 
		\end{bmatrix} \Big)G
	\end{align}
	By defining $Q = GP$ and $\Delta X_1= \delta_{BA} \begin{bmatrix}U_{0} \\ X_{0} \end{bmatrix}$, we rewrite~\eqref{eq:Lyap_stab} in the following
	\begin{align}\label{eq:Lyap_stab-2}
		&\hspace{-3mm}\exists P \succeq  0 \text{ s.t. } (X_1 \hspace{-1mm}+ \Delta X_1)GP(P)^{-1}PG^\top (X_1 \hspace{-1mm}+ \Delta X_1)^\top \preceq P  \nonumber \\
		\Leftrightarrow &\exists P \succeq 0 \text{ s.t. } (X_1 + \Delta X_1)Q(P)^{-1}Q^\top \hspace{-1mm} (X_1 + \Delta X_1)^\top \preceq P \nonumber \\
		\Leftrightarrow&\begin{bmatrix}
			X_{0} Q & (X_{1} + \Delta X_1) Q \\ 
			{Q}^\top \hspace{-1mm} { (X_{1} + \Delta X_1) }^\top & X_{0} Q
		\end{bmatrix}\succeq 0, \\  \nonumber
		& X_{0} Q = P \succeq 0,
	\end{align}
	where we apply Schur complement and use~\eqref{eq:GainK} in the derivations above. Note that the decision variable is changed from $K$ and $P$ to $Q$ as both $K$ and $P$ can be uniquely derived from $Q$. Specifically, $K = U_0G= U_0QP^{-1} = U_0Q(X_0Q)^{-1}$. The formulation~\eqref{eq:Lyap_stab-2} requires knowledge of $\Delta X_1$ which we do not have in general. An alternative and robust reformulation is to require~\eqref{eq:Lyap_stab-2} holds for all possible $\Delta X_1 Q$, where the results in Theorem~\ref{thm:1} or Corollary~\ref{cor:1} become handy. Without loss of generality, we let $P = I$, and then write the upper bound of $K$ and $\Delta X_1 Q$ as shown in the following:
	\begin{subequations}\label{eq:bdd_DX1}
		\begin{align}
			\norm{K} &= \norm{U_0 Q} \leq \bar{K} \\ 
			\norm{\Delta X_1 Q} &= \norm{ \delta_{BA} \begin{bmatrix}U_{0} \\ X_{0} \end{bmatrix} Q} \leq \norm{ \delta_{BA} }\norm{ \begin{bmatrix} K \\ I \end{bmatrix} } \leq \bar{\Delta X_1}.
		\end{align}
	\end{subequations}
	Rewriting~\eqref{eq:bdd_DX1} to LMIs gives~\eqref{eq:Lyap_stab-3_2} and
	\begin{align}
		& \begin{bmatrix}
			\bar{\Delta X_1}I & \Delta X_1 Q \\ (\Delta X_1 Q)^\top & \bar{\Delta X_1} I
		\end{bmatrix}  \succeq 0. \label{eq:bdd_DX1-2-2}
	\end{align}
	An alternative robust reformulation of~\eqref{eq:Lyap_stab-2} is to require that~\eqref{eq:Lyap_stab-2} holds for all $\Delta X_1$ satisfying~\eqref{eq:bdd_DX1-2-2} with $X_0 Q = P = I$. By applying S-procedure, we derive~\eqref{eq:Lyap_stab-3_3} and complete the proof.
\end{proof}

The matrix inequalities condition in Theorem~\ref{thm:robust_LTI} are nonlinear because of the bilinear term $(1 -\lambda )\Delta X_1 Q$, which pose some challenge to solve. An alternative convex formulation of~\eqref{eq:Lyap_stab-3} is available. Because $\Delta X_1$ does not show up in any other part of~\eqref{eq:Lyap_stab-3} other than~\eqref{eq:Lyap_stab-3_3}, we can define a new variable $\Delta Q:= \Delta X_1 Q$ to bypass the bilinear term $\Delta X_1 Q$ and retrieve $\Delta X_1$ afterward. The only remaining bilinear term is on $\lambda \Delta Q$. Numerically, we can  fix $\lambda$ at a small value and check the feasibility of~\eqref{eq:Lyap_stab-3}. If~\eqref{eq:Lyap_stab-3} is feasible for the given $\lambda$, we can still find a robust feedback control gain, $K$.

\subsection{Robust controller design for switched linear systems}

We extend the robust data-driven control to the switched linear system. The objective is to derive feedback controls $u = K_{i} x$ for $i\in\Gamma$ such that the switched linear system is stable under random switching. Finding a common Lyapunov function in the following guarantees the stability
\begin{align}\label{eq:Lyap_stab_sw}
	\exists P \succeq 0 \text{ s.t. } (A_i +B_i K_i)P(A_i +B_iK_i)^\top \hspace{-2mm}\preceq P, \;\forall i\in\Gamma.
\end{align}
Applying Lemma~\ref{thm:id_sys}, we can identify the system matrices for each mode $i\in\Gamma$ with data of randomly switching sequence. 
Similar to the last subsection, for each $i\in\Gamma$, we define $G_{i}$ by
\begin{align*}
	\begin{bmatrix}
		K_i \\ I 
	\end{bmatrix} = \begin{bmatrix}
		U_{i,0} \\ X_{i,0} 
	\end{bmatrix} G_i,
\end{align*}
which leads to 
\begin{align}\label{eq:data_rep_sw}
	&A_{i} + B_i K_i = [B_i \;\; A_i]\begin{bmatrix} K_i \\ I \end{bmatrix} = [B_i \;\; A_i] \begin{bmatrix} U_{i,0} \\ X_{i,0} \end{bmatrix} G_i \\  \nonumber
	&=  \Big([B_i^e \;\; A_i^e] + \delta_{BA_i} \Big) \begin{bmatrix} U_{i,0} \\ X_{i,0} \end{bmatrix} G_i \\  \nonumber
	&= \Bigg( \Big(X_{1} - \sum_{j\in\Gamma,j\not=i} [B_j^e \; A_j^e]\begin{bmatrix}
		U_{j,0} \\ X_{j,0}
	\end{bmatrix}\Big) + \delta_{BA_i}  \begin{bmatrix} U_{i,0} \\ X_{i,0} \end{bmatrix} \Bigg) G_i \\  \nonumber
	&= \Bigg( \Big(X_{1} - \hspace{-3mm} \sum_{j\in\Gamma,j\not=i} \hspace{-2mm} X_{1}\begin{bmatrix}
		U_{j,0} \\ X_{j,0}
	\end{bmatrix}^\dagger \begin{bmatrix}
		U_{j,0} \\ X_{j,0}
	\end{bmatrix} \Big) + \delta_{BA_i}  \begin{bmatrix} U_{i,0} \\ X_{i,0} \end{bmatrix} \Bigg) G_i \\  \nonumber
	&= \Bigg( X_{1} \Big( I- \hspace{-3mm} \sum_{j\in\Gamma,j\not=i} \begin{bmatrix}
		U_{j,0} \\ X_{j,0}
	\end{bmatrix}^\dagger \begin{bmatrix}
		U_{j,0} \\ X_{j,0}
	\end{bmatrix} \Big) + \delta_{BA_i} \begin{bmatrix} U_{i,0} \\ X_{i,0} \end{bmatrix} \Bigg)G_i.
\end{align}
Equation~\eqref{eq:data_rep_sw} is analogous to~\eqref{eq:data_rep} with the only difference on an additional term that multiplies with $X_1$ dependent on the mode $i\in\Gamma$. From this point on, we can introduce $Q_i$ for each $i\in\Gamma$ repeat the steps in the previous subsection to formulate a semidefinite programming similar to~\eqref{eq:Lyap_stab-3} for the robust control feedback gains. We skip those similar derivations that involve tedious notations with only marginal additional insights. 

\section{Numerical Results}
We consider a switched system in the form of~\eqref{eq:sw_sys} with $x\in\real^{20}$,  $u\in\real^{10}$ and $|\Gamma| = 5$. All the elements of $B_i$ for each $i\in\Gamma$ are randomly generated between $0$ and $0.1$;  $A_i$ for each $i\in\Gamma$ is generated as $A_i = 0.9 \cdot I + \Delta A_i$, where all the elements of $\Delta A_i$ are also randomly generated between $0$ and $0.1$. For the first 500 steps, we run the system under random control and switching for the purpose of collecting data. The measurement noise of the state $x$ is uniformly distributed between $-0.5\%$ and $0.5\%$ of the absolute value of $x$. We first apply the results in Section~\ref{sec:sw_sys} to identify $[B_i^e\;\; A_i^e]$ for each $i\in\Gamma$. We use the bound in Corollary~\ref{cor:1} for the purpose of robust controller design. The bound, however, is too conservative. Therefore, we applied the pre-conditioning methods including column selection using~\cite[Algorithm 2]{tropp2009column} (with an additional lower bound of the number of columns) and diagonal scaling using Ruiz algorithm in~\cite{takapoui2016preconditioning}. The column selection method was found not improving the condition number for a tighter bound. Our explanation for the ineffectiveness of the column selection are (i) the additional lower bound on the number of columns; (ii)~\cite[Algorithm 2]{tropp2009column} is randomized by nature and may only useful for certain classes of matrices. The diagonal scaling, on the other hand, improve the condition numbers of all the mode by a factor of around 10 as shown in Table~\ref{tab:1}. The improved condition numbers are directly reflected on tighter bounds of the estimation errors as shown in Table~\ref{tab:2}. In addition, the actual estimation errors are reduced marginally with the diagonal scaling, shown in Table~\ref{tab:3}. Those far better condition numbers benefit the robust controller design. We next apply the results in Section~\ref{sec:noisy_data} to find robust feedback control gains $K_i$ for all $i\in\Gamma$. Figure~\ref{fig:n20_m15_x} shows the trajectory of $x$ under the robust feedback control. As expected, $x$ converges to the origin within a moderate number of steps under the control. 
\begin{table}[H]
	\centering
	\begin{tabular}{|c|c|c|} \hline
		& w/o pre-conditioning &  w pre-conditioning \\ \hline 
		Mode 1 & 199.1373 &  21.0689 \\
		Mode 2 & 136.7279 &  16.3103 \\
		Mode 3 & 160.5263 &  18.2697 \\
		Mode 4 & 173.2082 &  18.6434 \\
		Mode 5 & 170.2047 &  20.3172 \\ \hline 
	\end{tabular} 
	\caption{The condition number of $[U_{i,0}^\top \; X_{i,0}^\top]^\top$ w/wo pre-conditioning the data matrices.}
	\label{tab:1}
\end{table}\vspace{-8mm}
\begin{table}[h]
	\centering
	\begin{tabular}{|c|c|c|} \hline
		& w/o pre-conditioning &  w pre-conditioning \\ \hline 
		Mode 1 & 4.0230 &  0.4256 \\
		Mode 2 & 2.7622 &  0.3295\\
		Mode 3 & 3.2430 &  0.3691\\
		Mode 4 & 3.4992 &  0.3766\\
		Mode 5 & 3.4385 &  0.4104 \\ \hline 
	\end{tabular} 
	\caption{The upper bounds of $\frac{\|\delta_{BA}\|}{\|[B \; A]\|}$ w/wo pre-conditioning the data matrices.}
	\label{tab:2}
\end{table}\vspace{-8mm}
\begin{table}[H]
	\centering
	\begin{tabular}{|c|c|c|c|} \hline
		& w/o pre-conditioning &  w pre-conditioning & $\%$ changes \\ \hline 
		Mode 1 & 0.0136  &  0.0115 & 15$\%$ \\
		Mode 2 & 0.0115  &  0.0095 & 17$\%$ \\
		Mode 3 & 0.0130  &  0.0125 & 4$\%$ \\
		Mode 4 & 0.0165  &  0.0155 & 6$\%$ \\
		Mode 5 & 0.0129  &  0.0125 & 3$\%$ \\ \hline 
	\end{tabular} 
	\caption{The value of $\frac{\|\delta_{BA}\|}{\|[B \; A]\|}$ w/wo pre-conditioning the data matrices.}
	\label{tab:3}
\end{table}\vspace{-8mm}

\begin{figure}[H]
	\centering
	\includegraphics[width=0.5\textwidth]{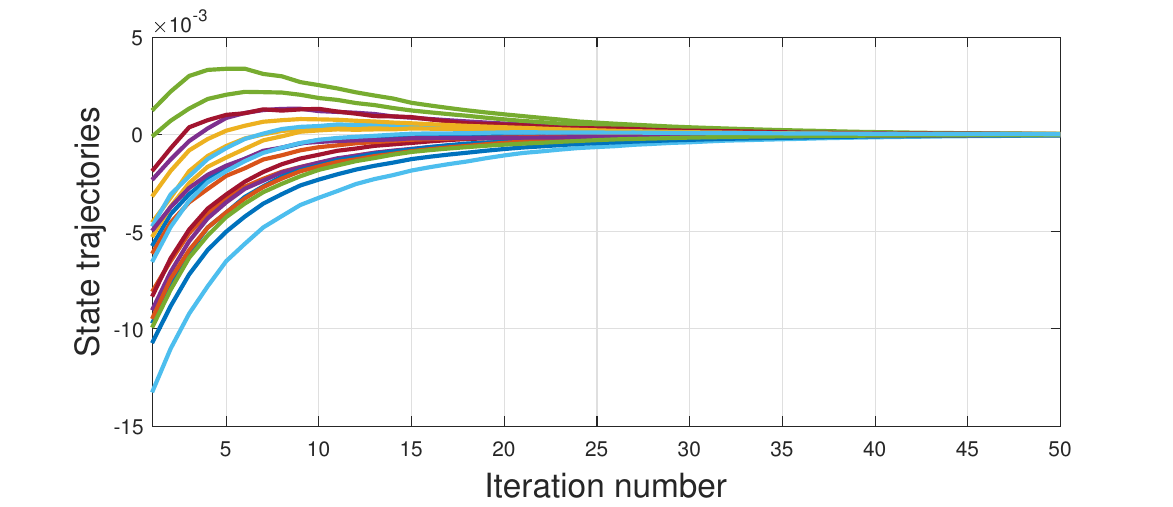}
	\caption{State trajectories under the proposed data-driven robust controller. }
	\label{fig:n20_m15_x}
\end{figure}

\section{Conclusion}
In this paper, we analyze system identification errors originated from noisy data and methods of pre-conditioning the data to improve the error bounds. The bounds on the inaccurate modeling are incorporated in robust controller design for LTI systems and switched linear systems. In the future, we will migrate the focus toward real-world applications and make necessary adjustments depending on the application needs.


\bibliographystyle{IEEEtran}
\bibliography{biblio.bib}
\end{document}